\newtheorem{theorem}{Theorem}
\newtheorem{definition}{Definition}
\newtheorem{lemma}[theorem]{Lemma}
\newtheorem{corollary}[theorem]{Corollary}
\newtheorem{observation}[theorem]{Observation}
\newcommand{\ch}{\text{ch}}
\newenvironment{subproof}{%
  \begin{proof}[Subproof]%
}{%
  \end{proof}%
}
\title{Correspondence coloring and its application to list-coloring planar graphs without cycles of lengths $4$ to $8$}
\author{
Zdeněk Dvořák\thanks{Charles University, Prague, Czech Republic. E-mail: {\tt rakdver@iuuk.mff.cuni.cz}.  Supported by project GA14-19503S (Graph coloring and structure) of Czech Science Foundation.}\and
Luke Postle\thanks{University of Waterloo. E-mail: {\tt lpostle@uwaterloo.ca}.  Partially supported by NSERC under Discovery Grant No. 2014-06162.}}
\date{}
\begin{document}
\maketitle

\begin{abstract}
We introduce a new variant of graph coloring called \emph{correspondence coloring} which generalizes list coloring and
allows for reductions previously only possible for ordinary coloring.  Using this tool, we
prove that excluding cycles of lengths $4$ to $8$ is sufficient to guarantee $3$-choosability of a planar graph, thus answering a question of Borodin.
\end{abstract}

\section{Introduction}

The study of colorings of planar graphs has a long history, starting with the Four Color Problem in the 19th century.
Let us recall that a \emph{$k$-coloring} of a graph $G$ is a function that assigns one of $k$ colors to each of
the vertices of $G$, so that adjacent vertices have different colors, and we say that $G$ is \emph{$k$-colorable} if it has a $k$-coloring.
The Four Color Problem (that is, whether every planar graph is $4$-colorable) was eventually resolved in the affirmative by Appel and Haken~\cite{AppHak1} in 1976, but over the time
it has inspired many other coloring results; for example, a significantly easier proof that planar graphs are $5$-colorable
dates back to the works of Kempe and Heawood, and Gr\"otzsch~\cite{grotzsch1959} proved that planar triangle-free graphs
are $3$-colorable.

It also inspired the study of many variants of graph coloring, most prominently \emph{list coloring}.
A \emph{list assignment} for a graph $G$ is a function $L$
that to each vertex $v\in V(G)$ assigns a list $L(v)$ of colors. An \emph{$L$-coloring} of $G$ is a proper coloring $\varphi$
such that $\varphi(v)\in L(v)$ for every $v\in V(G)$.  A graph $G$ is \emph{$k$-choosable} if there exists an $L$-coloring of $G$
for every assignment $L$ of lists of size $k$ to vertices of $G$.

Clearly, every $k$-choosable graph is $k$-colorable, but the converse is known not to be true.  For example,
while every planar triangle-free graph is $3$-colorable, there exist such graphs that are not $3$-choosable~\cite{voigt1995},
and while every planar graph is $4$-colorable, not all are $4$-choosable~\cite{voigt1993}.  On the other hand,
planar graphs are $5$-choosable~\cite{thomassen1994}, and every planar graph without cycles of lengths $3$ and $4$ is
$3$-choosable~\cite{thomassen1995-34}.

The method of reducible configurations is a broadly applicable technique used to prove both colorability and list-colorability results,
especially in sparse graphs: to show that a graph $G$
is colorable, we find a special subgraph $H$ of $G$ such that after modifying $G$ locally in
the neighborhood of $H$ (performing a \emph{reduction}), we obtain a smaller graph $G'$ such that every
coloring of $G'$ can be extended to a coloring of $G$.  Repeating this procedure, we eventually reduce the graph to one
of bounded size, whose colorability can be shown directly.

It should not come as a surprise that many of the reductions used for $k$-colorability are not possible in the list-coloring setting.
For ordinary coloring, the reductions commonly involve identifications of vertices.  For example, to prove that every planar graph
$G$ is $5$-colorable, we first observe that $G$ contains a vertex $v$ of degree at most $5$.  If $\deg(v)\le 4$, then every $5$-coloring
of $G-v$ extends to a $5$-coloring of $G$.  If $\deg(v)=5$, then we find two non-adjacent neighbors $x$ and $y$ of $v$, and let
$G'$ be the graph obtained from $G-v$ by identifying $x$ and $y$ to a new vertex $w$.  Given a $5$-coloring $\varphi$ of $G'$,
we then obtain a $5$-coloring of $G$ by giving both $x$ and $y$ the color $\varphi(w)$ and choosing a color for $v$ distinct
from the colors of its neighbors.

In contrast, for list coloring it is in general not possible to identify vertices, since they might have different lists.
Hence, this type of argument for ordinary coloring does not translate directly to the list coloring setting
(and e.g., no proof of $5$-choosability of planar graphs based on reducible configurations is known).
To overcome this difficulty, in this paper we devise a generalization of list coloring which we call \emph{correspondence coloring}
and which enables reductions using vertex identification.  We should note that the use of vertex identification
for correspondence coloring is somewhat limited, e.g., we cannot emulate the example in the previous paragraph as we are essentially
restricted to identifications which do not create parallel edges. However, we present a non-trivial application
in proving the following result, answering a question raised by Borodin~\cite{borsurvey}.

\begin{theorem}\label{thm-main}
Every planar graph $G$ without cycles of lengths $4$ to $8$ is $3$-choosable.
\end{theorem}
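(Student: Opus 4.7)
The plan is to prove Theorem~\ref{thm-main} by establishing a stronger statement cast in the language of correspondence coloring, since this is what permits the identification-style reductions that are unavailable for ordinary list coloring. I would formulate an inductive hypothesis for plane graphs without cycles of lengths $4$ to $8$ in which the outer face is precolored (or subject to a similar boundary condition), and the interior carries a correspondence assignment of size $3$; the goal is to show that every such precoloring extends.

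Suppose for contradiction that a vertex-minimum counterexample $G$ exists. The immediate structural fact from the hypothesis is that every face of $G$ is a triangle or has length at least $9$; the whole argument revolves around exploiting this length gap. The bulk of the work is then to rule out local ``reducible'' configurations in $G$: vertices of very low degree, $3$-vertices incident to triangles, triangles that are close to one another, short chains of light vertices on a large face adjacent to a triangle, and so on. Correspondence coloring enters precisely here: for each such configuration one wants to delete or contract a small subgraph and extend the remaining coloring by minimality; whenever ordinary coloring would permit the identification of two vertices that is blocked in list coloring by incompatible lists, the correspondence framework lifts that block by allowing us to choose the matching between the relevant lists after the contraction is made.

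Once a sufficient catalog of reducible configurations is established, I would finish with a discharging argument. Assign each vertex $v$ initial charge $d(v)-4$ and each face $f$ initial charge $\ell(f)-4$; by Euler's formula the total is $-8$. Design rules under which big faces (charge at least $5$) donate to triangles and nearby low-degree vertices, and verify, using the reducible-configuration list, that every vertex and face has non-negative final charge, contradicting the $-8$ total.

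The main obstacle I anticipate is the discharging design. The absence of $4$- to $8$-cycles forces a quantitative separation between triangles and between triangles and low-degree vertices, but extracting the right bounds so that each triangle receives exactly enough charge while no large face overspends is delicate; this is what distinguishes the ``$4$ to $8$'' case from the already-known ``$4$ to $9$'' case due to Borodin. Packaging the inductive hypothesis so that correspondence coloring genuinely unlocks the extra reductions, while remaining compatible with a precolored-boundary induction that makes the discharging close, is the conceptual heart of the proof.
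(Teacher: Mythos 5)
Your high-level plan is indeed the paper's plan (strengthen to a correspondence-coloring extension statement with a precolored boundary face, take a minimal counterexample, prove configurations reducible via identification, finish by discharging), but as written it stops short of the actual proof at exactly the points where the difficulty lies, and two of those omissions are genuine gaps rather than routine details. First, the strengthened statement cannot simply be ``a correspondence assignment of size $3$ on the interior'': the paper must add the hypothesis that the assignment is \emph{consistent on every closed walk of length $3$}, and must bound the precolored face (length at most $12$). The triangle-consistency hypothesis is not decorative; it is what allows a minimal counterexample to have all edges incident with the key configuration \emph{full}, and hence (by an equivalence/renaming lemma) \emph{straight}, and only then does the identification-plus-recoloring step go through. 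Your description of how correspondence coloring unlocks identification --- ``choosing the matching between the relevant lists after the contraction is made'' --- is not how it works and would not be justified: the correspondences are given, not chosen; what one actually does is renumber colors vertex-by-vertex (an equivalence of assignments) to straighten edges in advance, which is only possible when the relevant triangle edges are full, and fullness is extracted from minimality together with the triangle-consistency assumption. Without building this into the induction statement, the contraction step in your sketch has no justification.

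Second, the catalog of reducible configurations and the discharging are not ``anticipated obstacles'' to be filled in later --- they are the theorem. The paper needs essentially one nontrivial reducible configuration, the \emph{tetrad}: a path $v_1v_2v_3v_4$ of degree-$3$ vertices on a face with $v_1v_2$ and $v_3v_4$ lying in triangles; it is reduced by deleting the four vertices and identifying a neighbor $y_1$ of $v_1$ with the apex $x_4$ of the triangle at $v_3v_4$, after checking (using the absence of $4$- to $8$-cycles and the structure lemmas about short separating cycles and chords) that this creates no short cycle and no edge inside the precolored set. Your list (``$3$-vertices incident to triangles, triangles close to one another, short chains of light vertices...'') does not pin this down, and the subsequent discharging depends on exactly which configuration is excluded: the paper's face analysis for $9$-faces repeatedly invokes the tetrad lemma, needs an extra rule sending charge from vertices of degree at least $4$ across angles adjacent to a triangle, and must absorb the negative charge of the precolored boundary (vertices of $S$ can end with charge $-3/2$), which is why the bound $|S|\le 12$ appears in the induction. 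Your proposed charges $d(v)-4$ and $\ell(f)-4$ with total $-8$ are a legitimate alternative normalization, but with no rules and no verification there is nothing to check; in particular you give no reason why the count closes against a precolored face of unbounded length, and indeed it does not --- the quantitative boundary condition is part of what makes the induction work.
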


\subsection{Correspondence coloring}

Let us now give the definition and some basic properties of correspondence coloring.
Before that, let us point out that all graphs considered in this paper are simple, without loops or parallel edges;
while this bears little consequence for ordinary and list coloring, it is important in the correspondence coloring
setting.

We are again coloring the vertices of a graph $G$ (either using a fixed set of colors
or from lists assigned to the vertices; as we will see momentarily, this
distinction does not make much difference for correspondence coloring).
However, coloring a vertex $u$ by a color $c$ may prevent different colors from
being used at its neighbors (for each edge $uv$, we are prescribed a
\emph{correspondence} which matches some of the colors at $u$ with some of the colors at $v$).  More formally:

\begin{definition}
Let $G$ be a graph.
\begin{itemize}
\item A \emph{correspondence assignment} for $G$ consists of a list assignment $L$ and
a function $C$ that to every edge $e=uv\in E(G)$
assigns a partial matching $C_e$ between $\{u\}\times L(u)$ and $\{v\}\times L(v)$ (the Cartesian product
is used to distinguish the vertices of $C_e$ in case the same color appears both in $L(u)$ and $L(v)$).
\item An \emph{$(L,C)$-coloring} of $G$ is a function $\varphi$ that to each vertex $v\in V(G)$ assigns a color $\varphi(v)\in L(v)$,
such that for every $e=uv\in E(G)$, the vertices $(u,\varphi(u))$ and $(v,\varphi(v))$ are non-adjacent in $C_e$.
We say that $G$ is $(L,C)$-colorable if such an $(L,C)$-coloring exists.
\end{itemize}
\end{definition}
Clearly, this generalizes list coloring, as when we set $C_{uv}$ to match exactly the common colors of $L(u)$ and $L(v)$
for each $uv\in E(G)$, an $(L,C)$-coloring is an $L$-coloring.

Since we specify for each color to which colors at adjacent vertices it corresponds, we can rename the colors arbitrarily
(while updating the correspondence assignment) without affecting the existence of the correspondence coloring.
More precisely, if $(L,C)$ is a correspondence assignment for $G$, $c_1\in L(v)$, and $c_2\not\in L(v)$,
then consider the correspondence assignment $(L',C')$ such that $L'(u)=L(u)$ for $u\in V(G)\setminus\{v\}$,
$L'(v)=(L(v)\cup \{c_2\})\setminus \{c_1\}$, and $C'_e$ is obtained from $C_e$ by replacing the vertex $(v,c_1)$
by $(v,c_2)$ for each $e\in E(G)$.  Then every $(L,C)$-coloring $\varphi$ of $G$ can be transformed into an $(L',C')$-coloring
of $G$ by changing the color of $v$ to $c_2$ if $\varphi(v)=c_1$, and vice versa.  We say that $(L',C')$ is obtained
from $(L,C)$ by \emph{renaming (at vertex $v$)}. Two correspondence
assignments are \emph{equivalent} if one can be obtained from the other by repeated renaming.  The following
facts follow from the previous analysis.

\begin{observation}\label{obs-equiv}
Let $(L,C)$ and $(L',C')$ be equivalent correspondence assignments for a graph $G$,
such that $(L',C')$ is obtained from $(L,C)$ by a sequence of renamings at vertices belonging
to a set $X\subseteq V(G)$.  If $\varphi$ is an $(L,C)$-coloring of $G$, then there exists
an $(L',C')$ coloring $\varphi'$ of $G$ such that $\varphi'(v)=\varphi(v)$ for all $v\in V(G)\setminus X$.
In particular, $G$ is $(L,C)$-colorable if and only if $G'$ is $(L',C')$-colorable.
\end{observation}

For an integer $k$, let $[k]$ denote the set $\{1,\ldots, k\}$.

\begin{observation}\label{obs-kass}
Let $(L,C)$ be a correspondence assignment for a graph $G$, such that $L$ assigns lists
of the same size $k$ to each vertex of $G$.  Then there exists an equivalent correspondence
assignment $(L',C')$ such that $L'(v)=[k]$ for all $v\in V(G)$.
\end{observation}

That is, as we alluded to before, the lists do not matter for correspondence coloring (as long as all
vertices use the same number of colors).  The fact that we can make all the lists be the same
is crucial for our application of correspondence coloring, as we now can make sense of vertex identification.
These observations motivate the following definition.

\begin{definition}
Let $G$ be a graph.
\begin{itemize}
\item A \emph{$k$-correspondence assignment} for $G$ is a function $C$ that to each edge $e=uv\in E(G)$
assigns a partial matching $C_e$ between $\{u\}\times [k]$ and $\{v\}\times [k]$.
Letting $L$ be the correspondence assignment that to each vertex assigns the list $[k]$,
we say that each $(L,C)$-coloring of $G$ is a \emph{$C$-coloring} of $G$, and we say that
$G$ is \emph{$C$-colorable} if such a $C$-coloring exists.
\item The \emph{correspondence chromatic number} of $G$ is the smallest integer $k$ such that $G$ is $C$-colorable
for every $k$-correspondence assignment $C$.
\end{itemize}
\end{definition}

By Observations~\ref{obs-equiv} and \ref{obs-kass}, we see that every graph with correspondence
chromatic number $k$ is $k$-choosable.

To make the relationship between the correspondence chromatic number of a graph and its choosability clearer, 
we need to introduce the notion of consistency of a correspondence assignment.
Let $(L,C)$ be a correspondence assignment for a graph $G$, and let $W=v_1v_2\ldots v_m$ with $v_m=v_1$
be a closed walk of length $m-1$ in $G$.  We say that the assignment $(L,C)$ is \emph{inconsistent on $W$} if
there exist colors $c_1$, \ldots, $c_m$ such that $c_i\in L(v_i)$ for $i=1,\ldots, m$, $(v_i,c_i)(v_{i+1},c_{i+1})$
is an edge of $C_{v_iv_{i+1}}$ for $i=1,\ldots, m-1$, and $c_1\neq c_m$.  Otherwise, $(L,C)$ is \emph{consistent on $W$}.
We say that a correspondence assignment $(L,C)$ is \emph{consistent} if $(L,C)$ is consistent on every closed walk in $G$.
We omit the mention of the fixed list assignment $L$ in the case of $k$-correspondence assignments.

\begin{figure}
\begin{center}
\includegraphics{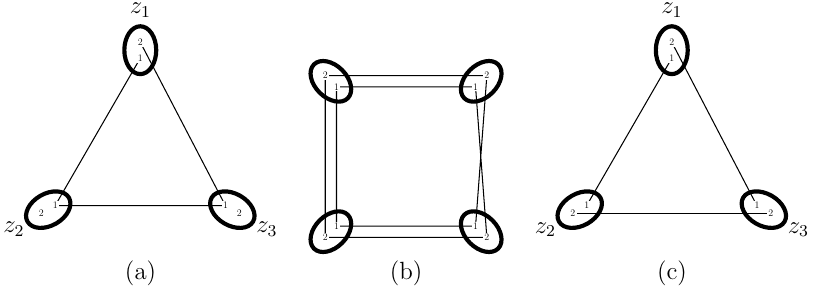}
\end{center}
\caption{Correspondence assignment examples}\label{fig-examples}
\end{figure}

For example, suppose that $G$ is the triangle $z_1z_2z_3$ and $C$ is the $2$-correspondence assignment consisting of the
edges $(z_1,1)(z_2,1)$, $(z_2,1)(z_3,1)$, and $(z_3,1)(z_1,2)$, see Figure~\ref{fig-examples}(a).  Then $C$ is inconsistent on $z_1z_2z_3z_1$ as
witnessed by colors $1,1,1,2$, but $C$ is consistent on $z_3z_1z_2z_3$.

Clearly, the notion of consistency is invariant under renaming of colors.
\begin{observation}\label{obs-consist}
Let $(L,C)$ and $(L',C')$ be equivalent correspondence assignments for a graph $G$.  For every closed walk $W$
in $G$, the correspondence assignment $(L,C)$ is consistent on $W$ if and only if $(L',C')$ is.
\end{observation}

We will now show that list coloring is exactly correspondence coloring with consistent assignments.

\begin{lemma}\label{lemma-chooscon}
A graph $G$ is $k$-choosable if and only if $G$ is $C$-colorable for every consistent $k$-correspondence assignment $C$.
\end{lemma}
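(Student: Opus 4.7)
The plan is to exhibit a natural dictionary in both directions: consistent correspondence assignments arise from list assignments by fixing bijections with $[k]$, and list assignments arise from consistent correspondence assignments by passing to equivalence classes on a covering-space-like construction.

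For $(\Leftarrow)$, suppose $G$ is $C$-colorable for every consistent $k$-correspondence assignment, and let $L$ be a list assignment with $|L(v)|=k$. I would fix arbitrary bijections $\pi_v:L(v)\to[k]$ and encode $L$ as the $k$-correspondence assignment with $C_{uv}(c)=\pi_v(\pi_u^{-1}(c))$, defined precisely when $\pi_u^{-1}(c)\in L(v)$. Each $C_{uv}$ is injective, and along any walk $W$ the composition telescopes to $\pi_{\text{end}}\circ\pi_{\text{start}}^{-1}$ on $\dom(C_W)$, so on closed walks $C_W$ is the identity on its domain and $C$ is consistent. A $C$-coloring $f$ then pulls back to the proposed $L$-coloring $\varphi(v)=\pi_v^{-1}(f(v))$; on an edge $uv$, either $f(u)\notin\dom(C_{uv})$, in which case $\varphi(u)\notin L(v)\ni\varphi(v)$, or $C_{uv}(f(u))\neq f(v)$, in which case $\pi_v(\varphi(u))\neq\pi_v(\varphi(v))$; either way $\varphi(u)\neq\varphi(v)$.

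For $(\Rightarrow)$, suppose $G$ is $k$-choosable and let $C$ be a consistent $k$-correspondence assignment. On the set $V(G)\times[k]$ I would define $(u,c)\sim(v,c')$ iff some walk $W$ from $u$ to $v$ satisfies $c\in\dom(C_W)$ and $C_W(c)=c'$. Reflexivity uses the trivial walk, symmetry uses $C_{vu}=C_{uv}^{-1}$, and transitivity uses concatenation. The crucial use of consistency is that $(u,c)\sim(u,c')$ entails a closed walk with $c'=C_W(c)=c$, so each fiber $\{u\}\times[k]$ meets each equivalence class in at most one element. Taking the color universe to be the set of equivalence classes and letting $L(u)$ be the set of classes meeting $\{u\}\times[k]$ then gives $|L(u)|=k$. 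Invoke $k$-choosability to obtain a proper $L$-coloring $\varphi$; for each $v$ there is a unique $f(v)\in[k]$ with $(v,f(v))\in\varphi(v)$. If some edge $uv$ violated the $C$-coloring condition, the length-one walk $uv$ would witness $\varphi(u)=\varphi(v)$, contradicting properness, so $f$ is a $C$-coloring.

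The only delicate point I anticipate is precisely the fiber-injectivity step in $(\Rightarrow)$, which is essentially what the consistency axiom was designed to guarantee; all other verifications reduce to routine bookkeeping about composition, with the only care being to treat backward edges via the convention $C_{vu}=C_{uv}^{-1}$ when concatenating walks.
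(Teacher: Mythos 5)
Your proposal is correct and follows essentially the same route as the paper: encoding a list assignment via bijections onto $[k]$ for one direction, and for the other direction quotienting $V(G)\times[k]$ by the walk-induced equivalence relation, with consistency guaranteeing that each fiber meets every class at most once so the resulting lists have size $k$. The verifications you spell out (telescoping along walks, the edge-by-edge properness check) are exactly the routine steps the paper leaves implicit.
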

\begin{proof}
Suppose first that $G$ is $C$-colorable for every consistent $k$-correspondence assignment $C$,
and let $L$ be an assignment of lists of size $k$ to vertices of $G$.  Let $(L,C')$ be the correspondence
assignment such that for each $uv\in E(G)$, $C'_{uv}$ contains exactly the edges $(u,c)(v,c)$ for all $c\in L(u)\cap L(v)$.
Clearly, $(L,C')$ is consistent, and by Observations~\ref{obs-kass} and \ref{obs-consist}, it is equivalent
to a consistent $k$-correspondence assignment $C$.  By assumption, $G$ is $C$-colorable, and by Observation~\ref{obs-equiv},
it is $(L,C')$-colorable; equivalently, $G$ is $L$-colorable. 
This shows that $G$ is $k$-choosable.

Suppose now that $G$ is $k$-choosable, and let $C$ be a consistent $k$-correspondence assignment for $G$.
Let $H$ be the graph with vertex set $V(G)\times [k]$ and edge set $\bigcup_{e\in E(G)} E(C_e)$.
Note that the consistency of $C$ is equivalent to the fact that for every $v\in V(G)$, each component
of $H$ intersects $\{v\}\times [k]$ in at most one vertex.  For each $v\in V(G)$, let $L(v)$ be the set of
components of $H$ that intersect $\{v\}\times [k]$; clearly $|L(v)|=k$.  By assumption,
there exists an $L$-coloring $\varphi$ of $G$, which we can transform into a $C$-coloring by giving each vertex
$v\in V(G)$ the unique color $c\in[k]$ such that $(v,c)\in V(\varphi(v))$.  Consequently, $G$ is $C$-colorable.
\end{proof}

Let us remark that choosability and correspondence coloring with inconsistent assignments may differ; for example,
even cycles are $2$-choosable, but they have correspondence chromatic number $3$ (see Figure~\ref{fig-examples}(b)
giving a $2$-correspondence assignment for that a 4-cycle cannot be colored).  Another clear distinction is
that while parallel edges do not affect choosability, they matter for correspondence coloring, as correspondences
on parallel edges may differ.

\subsection{Coloring planar graphs with cycles of forbidden lengths}

As we already mentioned, every planar triangle-free graph is $3$-colorable, and
forbidding cycles of lengths $3$ and $4$ is sufficient to guarantee
$3$-choosability.  These facts motivated a plethora of further results and
conjectures concerning sufficient conditions guaranteeing $3$-colorability of a
planar graph, see a survey of Borodin~\cite{borsurvey} for more information.

One of the most influential problems was proposed by Steinberg~\cite{conj-stein}, who conjectured that every planar graph without cycles of lengths $4$ and $5$
is $3$-colorable.  This conjecture was recently disproved by Cohen-Addad et al.~\cite{CohenAddad2016}.  On the positive side,
Borodin et al.~\cite{bor47} proved that every planar graph without cycles of lengths $4$ to $7$ is $3$-colorable,
and minor improvements over this result were obtained later; e.g., Borodin et al.~\cite{col457}
show that it suffices to exclude $5$-cycles, $7$-cycles, and $4$-cycles with a chord.
This leaves open the question of whether it suffices to forbid cycles of lengths $4$ to $6$,
which seems hard to resolve.

Concerning the list-coloring version of Steinberg's problem, the direct generalization of Steinberg's conjecture
was disproved already by Voigt~\cite{Voigt45}, who constructed a planar graph without cycles of lengths $4$ and $5$ that is not $3$-choosable.
On the positive side, the proof of Borodin~\cite{choos49} that every planar graph without cycles of lengths $4$ to $9$ is $3$-colorable
also implies that such graphs are $3$-choosable, and in his survey~\cite{borsurvey}, Borodin points out that it has been open for more than 15
years whether every planar graph without cycles of lengths $4$ to $8$ is $3$-choosable.  In Theorem~\ref{thm-main},
we prove this to be the case.  Our method is to prove a stronger result in the setting of correspondence coloring,
which implies Theorem~\ref{thm-main} by Lemma~\ref{lemma-chooscon}.

\begin{theorem}\label{thm-maingen1}
Every planar graph $G$ without cycles of lengths $4$ to $8$ is $C$-colorable for every
$3$-correspondence assignment $C$ that is consistent on every closed walk of length $3$ in $G$.
\end{theorem}

It might seem more natural to prove that planar graphs without cycles of lengths $4$ to $8$ have
correspondence chromatic number at most $3$ (i.e., not to restrict ourselves to correspondence assignments
that are consistent on closed walks of length $3$), and indeed, we do not know any counterexample to this
stronger claim.  However, we need to assume the consistency of closed walks of length $3$ in order to
perform the main reduction (Lemma~\ref{lemma-tetrad}) of our proof, and Theorem~\ref{thm-maingen} is still
stronger than Theorem~\ref{thm-main} by Lemma~\ref{lemma-chooscon}. 

It is not known whether excluding cycles of lengths $4$ to $7$, or even $4$ to
$6$, forces a planar graph to be $3$-choosable.  While the former question
might be resolved with some extra effort using the methods of this paper, the
latter one is wide open even for ordinary coloring.

\subsection{More results on correspondence coloring}

Proving Theorem~\ref{thm-maingen} shows the utility of the notion of correspondence coloring
as a tool to study list colorings.  Following our announcement of this result and
its usage of correspondence coloring, several other applications of correspondence coloring
were found.  In~\cite{corrprob}, the possibility to introduce new correspondences was used to keep
probabilities uniform across multiple iterations of a probabilistic argument. 

A further application was found by Bernshteyn and Kostochka~\cite{berkos} in characterizing
the $k$-list-critical graphs other than $K_k$ with a minimum number of edges. In an earlier work,
Bernshteyn, Kostochka and Pron~\cite{berkospron} characterized the graphs $G$ which are $C$-colorable
for all correspondence assignments $(L,C)$ such that $|L(v)|\ge \deg(v)$ for all $v\in V(G)$. Building on this,
they found the minimum number of edges in $k$-correspondence-critical graphs other than $K_k$, proving
it is the same bound as for $k$-list-critical graphs. Moreover, they characterized the graphs
which attain said bound, in turn characterizing the $k$-list-critical graphs which also attain
the bound, answering an open question from 2002. Their proof is inductive and uses the power of correspondence assignments.

Nevertheless, correspondence chromatic number seems to have many interesting properties on its own.
Many of the results for list coloring (especially those
based primarily on degeneracy arguments) translate to the setting of correspondence coloring; e.g., it is easy
to see that all planar graphs have correspondence chromatic number at most $5$ and that all planar graphs without cycles
of lengths $3$ and $4$ have correspondence chromatic number at most $3$, by mimicking
Thomassen's proofs~\cite{thomassen1994,thomassen1995-34} of the analogous choosability results.

Inspired by the previous versions of this paper, Bernshteyn~\cite{corr1} proved
that graphs of average degree $d$ have correspondence chromatic number $\Omega(d/\log d)$,
in contrast with the lower bound $\Omega(\log d)$ for their choosability~\cite{alondeg}.
Also, he showed that triangle-free graphs of maximum degree $\Delta$ have correspondence
chromatic number $O(\Delta/\log\Delta)$, thus determining the chromatic number of
regular triangle-free graphs up to a constant multiplicative factor.

In addition, correspondence chromatic number is related to lifts of graph.
A \emph{$k$-lift} of a graph $G$ is obtained from $G$ 
by replacing each vertex of $G$ with an independent set of size $k$ (called the \emph{fiber} of the replaced vertex) and by replacing each edge of $G$ with a perfect matching between the independent sets corresponding to its ends.
Thus if $H$ is a $k$-lift of $G$, then $V(H)=V(G)\times[k]$, and if $uv\in E(G)$, then $E(H[\{u,v\}\times[k]])$ is a perfect matching from $\{u\}\times[k]$ to $\{v\} \times [k]$. Clearly then
a $k$-lift is a $k$-correspondence assignment and indeed a $k$-correspondence assignment that is full on every edge is a $k$-lift.
Moreover, a $C$-coloring of such a full correspondence assignment is equivalent to an independent set of the $k$-lift
that intersects every fiber in exactly one vertex. Equivalently, if we replace every fiber by a clique of size $k$,
then a $C$-coloring is equivalent to an independent set of order $|V(G)|$ in the resulting graph.

Given this natural relationship to lifts, one may be tempted to think that
correspondence coloring is related to the unique games conjecture. But their
common relation to lifts is where that similarity ends. For in the unique games
conjecture, the desired subset of the lift is not independent, rather the
coloring sought is such that the colors of two adjacent vertices correspond if
possible and the value of said game is the maximum number of correspondences
that can be satisfied. In that sense, it is the very opposite of correspondence
coloring. 

One could also be tempted to generalize correspondence coloring by allowing any
subgraph instead of a matching for a correspondence. Such is certainly a
natural notion but of limited value. The known coloring proofs alluded to
before, such as Thomassen's, heavily use the fact that each color corresponds to
at most one other and hence only work in the setting of correspondence
coloring. Indeed, it is that property which drives their proofs and allows the
result to also hold for correspondence coloring. On the other hand, one could allow
correspondences to take the form of graphs of bounded degree, and in this setting nice theorems may still be
possible. However, such conditions are easily modeled using correspondence
coloring by allowing multiple edges. Indeed, Bernshteyn, Kostochka and Pron~\cite{berkospron} 
did precisely this in their work on correspondence assignments where the list size of each vertex is at least its degree.

\section{Straightness of correspondence assignments}

Before we start our work on the proof of Theorem~\ref{thm-maingen1}, let us introduce one more
idea that greatly simplifies some of the arguments, decreasing the number of $k$-corres\-pon\-dence assignments we need to consider.
We say that an edge $uv\in E(G)$ is \emph{straight} in a $k$-correspondence assignment $C$ for $G$ if every $(u,c_1)(v,c_2)\in E(C_{uv})$
satisfies $c_1=c_2$.  An edge $uv\in E(G)$ is \emph{full} if $C_{uv}$ is a perfect matching.
\begin{lemma}\label{lemma-tree}
Let $G$ be a graph with a $k$-correspondence assignment $C$.  Let $H$ be a subgraph of $G$ such that for every cycle $K$ in $H$,
the assignment $C$ is consistent on $K$ and all edges of $K$ are full.  Then there exists a $k$-correspondence assignment $C'$ for $G$
equivalent to $C$ such that all edges of $H$ are straight in $C'$, and $C'$ can be obtained from $C$ by renaming on vertices of $H$.
\end{lemma}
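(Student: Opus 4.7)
The plan is to construct the permutations $\pi_v$ inductively along a spanning tree of $H$, and then use the cycle-consistency hypothesis to argue that the remaining edges of $H$ are automatically straight. I handle each connected component of $H$ separately, and set $\pi_v = \mathrm{id}$ for every vertex $v \notin V(H)$ so that such vertices are fixed.

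In each component of $H$, I fix a spanning tree $T$ rooted at an arbitrary $r$, set $\pi_r = \mathrm{id}$, and process the remaining vertices in BFS order from $r$. When processing a vertex $v$ with parent $u$ in $T$, the requirement that $uv$ be straight in $C'$ unwinds (via the equivalence relation $C_{uv} = \pi_u \circ C'_{uv} \circ \pi_v^{-1}$) into the identity $\pi_v(C_{uv}(c)) = \pi_u(c)$ for $c \in \dom(C_{uv})$; this pins down $\pi_v$ on the image of $C_{uv}$ as an injective partial function (since $C_{uv}$ is injective and $\pi_u$ is a bijection), and I extend it arbitrarily to a permutation of $[k]$. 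By design, every edge of $T$ is then straight in $C'$.

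The core of the argument is the verification for a non-tree edge $uv \in E(H)$. Since $uv$ lies in some cycle of $H$, its endpoints lie in a common $2$-edge-connected component $B$ of $H$; moreover, the tree path $P$ from $u$ to $v$ in $T$ must stay inside $B$, because a simple path in $H$ cannot leave and re-enter $B$ without crossing some bridge of $H$ twice. Consequently, every edge of $P$ is contained in some cycle of $H$ and is therefore full, which lets me iterate the straightness relation established above along $P$ to obtain $\pi_v(C_P(c)) = \pi_u(c)$ for every $c \in [k]$. Combining this with the hypothesis that $C$ is consistent on the cycle $K = P \cup \{uv\}$ and that every edge of $K$ is full gives $C_P = C_{uv}$ on $[k]$ (an immediate computation of $C_K$ with basepoint $u$, using $C_{vu} = C_{uv}^{-1}$), so $\pi_v(C_{uv}(c)) = \pi_u(c)$ holds for every $c \in \dom(C_{uv}) = [k]$, i.e., $uv$ is straight in $C'$.

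The main conceptual obstacle is handling the bridges of $H$: the corresponding tree edges may be non-full, so $\pi_v$ is only partially forced by the tree and must be extended by an arbitrary choice. The observation that reconciles this is precisely the one above: for any non-tree edge of $H$, the tree path connecting its endpoints lives inside a single $2$-edge-connected component and therefore avoids all bridges of $H$, so the arbitrary extensions across bridges never enter the cycle computation that pins down the non-tree edges.
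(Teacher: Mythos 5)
Your proof is correct, and its engine is the same as the paper's: straighten a spanning tree edge by edge, then show every remaining edge of $H$ is forced straight because it closes a cycle of full edges on which $C$ is consistent, so $C_{uv}=C_P$ and the already-chosen permutations do the work. The organizational route differs, though. The paper decomposes $H$ into blocks, processes them in an order in which each new block meets the previous ones in at most one vertex, and straightens a spanning tree of each block; this makes the tree path associated with a non-tree edge automatically live inside a $2$-connected block, so fullness of its edges is immediate from the hypothesis, and bridges of $H$ are just blocks with $s=2$ needing no further argument. You instead take a single spanning tree per component of $H$ and must therefore prove the extra structural claim that the tree path between the ends of a non-tree edge stays inside the relevant $2$-edge-connected component, so that its edges are full; that claim is true, and it is exactly what lets the arbitrary extensions of $\pi_v$ across bridges stay out of the computation. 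One small caveat: your stated justification (``a simple path cannot leave and re-enter $B$ without crossing some bridge twice'') is literally valid if $B$ is the vertex class of the bridge-partition (no bridge separates two of its vertices); if $B$ is read as an edge-equivalence class, the path could in principle try to exit through another class attached at a cut vertex, and one should instead argue via maximality that an exit at $p$ and re-entry at $q\neq p$ would create a cycle joining an edge of $B$ to an edge outside $B$ (re-entry at $q=p$ violating simplicity). Either reading closes the argument, so this is a presentational point rather than a gap; the trade-off is that the paper's block ordering buys this for free, while your version avoids the block bookkeeping at the cost of one graph-theoretic lemma.
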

\begin{proof}
Without loss of generality, we can assume that $H$ is connected, as otherwise we perform the renaming on the components of $H$
separately.  Let $T$ be a spanning tree of $H$ rooted in a vertex $v$.  We perform renaming on the vertices of $T$ in DFS order
(with no renaming performed at $v$).  When we are processing a vertex $u$ whose parent in $T$ is $w$, we rename the colors at $u$
arbitrarily so that the edge $wu$ becomes straight.

Let $C'$ be the resulting $k$-correspondence assignment.  By the construction, all edges of $T$ are straight in $C'$.
Let us consider an edge $e\in E(H)\setminus E(T)$, and let $K$ be the unique cycle in $T+e$.  By assumption,
all the edges of $K$ are full, and all but $e$ are straight in $C'$.  By Observation~\ref{obs-consist}, $C'$ is consistent
on $K$, and thus $e$ is straight in $C'$ as well.
\end{proof}
Let us remark that in particular Lemma~\ref{lemma-tree} applies whenever $H$ is a forest.  Also, we cannot omit the assumption
that the edges of every cycle in $H$ are full---consider e.g. a triangle $K=v_1v_2v_3$ with a $2$-correspondence assignment $C$
with edges $(v_1,1)(v_2,1)$, $(v_2,2)(v_3,2)$, and $(v_3,1)(v_1,2)$, see Figure~\ref{fig-examples}(c).
Clearly, $C$ is consistent on $K$, but there exists no equivalent assignment such that all edges of $K$ are straight.

\section{The main result}

We are going to prove the following statement, which generalizes Theorem~\ref{thm-maingen1} by further allowing some of the
vertices to be precolored.

\begin{theorem}\label{thm-maingen}
Let $G$ be a plane graph without cycles of lengths $4$ to $8$.  Let $S$ be a set of vertices of $G$ such that either $|S|\le 1$, 
or $S$ consists of all vertices incident with a face of $G$.
Let $C$ be a $3$-correspondence assignment for $G$ such that $C$ is consistent on every closed walk of length $3$ in $G$.
If $|S|\le 12$, then for any $C$-coloring $\varphi_0$ of $G[S]$, there exists a $C$-coloring $\varphi$ of $G$ whose restriction
to $S$ is equal to $\varphi_0$.
\end{theorem}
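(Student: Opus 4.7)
The plan is to argue by induction on $|V(G)|+|E(G)|$, taking a minimum counterexample $(G,S,C,f_0)$. First I would carry out normalizations. Since $C$ is consistent on every triangle of $G$, Lemma~\ref{lemma-tree} lets me assume a convenient subgraph is straight: in particular, a spanning tree of $G[S]$ together with carefully chosen additional edges inside triangles can be straightened, which in turn lets me describe $f_0$ concretely (for instance, with $f_0\equiv 1$ on $S$ after translation). By Lemma~\ref{lemma-equiv} I can also assume every edge is full (extending a partial correspondence to a full injection only makes coloring harder). I would assume $G$ is connected and, when $|S|>1$, that $S$ bounds the outer face, so the task is to extend $f_0$ inward. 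Because cycles of length $4$ through $8$ are forbidden, every internal face is either a triangle or has length at least $9$, and two triangles can share at most a vertex.

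The core step is assembling a catalogue of reducible configurations that cannot occur in the counterexample. Simple ones include internal vertices of degree at most $2$ (delete and color last, using that $C$ imposes at most two constraints) and internal $3$-vertices with two neighbors in $S$ where the induced correspondence leaves a color free. The genuinely new reductions use correspondence coloring to perform vertex identifications which would be illegal in the list setting. For example, given a triangle $vxy$ with $v\notin S$ of small degree, one can delete $v$ and identify $x$ with $y$, replacing their edges into the rest of $G$ by edges carrying composed correspondences; having first straightened the triangle via Lemma~\ref{lemma-tree}, the resulting $C'$ on the smaller graph $G'$ is a well-defined $3$-correspondence assignment consistent on every triangle, so the induction hypothesis applies, and any $C'$-coloring lifts to a $C$-coloring of $G$. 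A careful enumeration in this spirit---internal $3$-vertices inside triangles, pairs of triangles meeting at a vertex, and short configurations between triangles and the boundary $S$---would form the reducible list. Reductions that modify the boundary coloring (needed when the configuration touches $S$) must be designed so that the modified $f_0'$ on $S'$ still satisfies $|S'|\le 12$ and remains a $C'$-coloring.

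With the catalogue established, the final step is discharging. I would assign initial charge $\deg(v)-4$ to each vertex and $\ell(f)-4$ to each interior face (with a correction of $\ell(F)+4$ on the outer face $F$ bounded by $S$), so that by Euler's formula the total is a negative constant. Discharging rules would then transfer surplus charge from $9^+$-faces to the incident triangles and to low-degree internal vertices; the absence of reducible configurations should force every vertex and interior face to end nonnegative, a contradiction. The principal obstacle is designing the reducible configurations so that this inequality actually closes: the window $4$--$8$ of forbidden lengths is the narrowest for which one expects the approach to succeed, so the discharge from long faces is only barely enough, and every dense triangle cluster compatible with the girth constraints must be separately shown reducible. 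The handling of configurations that straddle the precolored face $S$ (which may have up to $12$ vertices) is a second delicate point, since each such reduction must both preserve the size bound on $S$ and remain compatible with $f_0$ after the correspondence is modified.
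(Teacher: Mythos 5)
Your overall skeleton (minimal counterexample, straightening correspondences via Lemmas~\ref{lemma-equiv} and~\ref{lemma-tree}, an identification-based reduction, then discharging) matches the paper, but two of your concrete steps fail and the actual mathematical core is left unspecified. First, you cannot ``assume every edge is full'' by Lemma~\ref{lemma-equiv}: that lemma only permits renaming colors at vertices, whereas enlarging $\dom(C_{uv})$ changes the instance, and on an edge of a triangle an arbitrary enlargement can destroy the hypothesis that $C$ is consistent on closed walks of length $3$ --- in which case the enlarged instance is not covered by the statement being proved and minimality gives you nothing. This is exactly why the paper proves Lemma~\ref{lemma-exedm} (fullness only for edges not in triangles, and merely $|\dom(C_{uv})|\ge 2$ on triangle edges) and the more delicate Lemma~\ref{lemma-exed} (fullness on triangles having two internal degree-$3$ vertices), both of which need the triangle-consistency hypothesis in an essential way. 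Second, your sample reduction is invalid: in a triangle $vxy$ the vertices $x$ and $y$ are adjacent, so a coloring of the graph obtained by identifying them cannot be lifted to a proper ($C$-)coloring of $G$; identifications are only usable for vertices that are pairwise non-adjacent and, here, far apart. The reduction that actually carries the proof is the \emph{tetrad}: a path $v_1v_2v_3v_4$ of internal degree-$3$ vertices on a face with $v_1v_2$ and $v_3v_4$ lying in triangles $v_1v_2x_1$ and $v_3v_4x_4$; one deletes $v_1,\dots,v_4$ and identifies $y_1$ (the remaining neighbor of $v_1$) with $x_4$, which creates no cycle of length $4$ to $8$ precisely because such cycles are excluded in $G$, and the lift back uses the straightness and fullness of the edges at the tetrad provided by Lemmas~\ref{lemma-exedm}, \ref{lemma-exed} and~\ref{lemma-tree}.

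Because you never pin down this configuration (or any comparable catalogue), the discharging step is not just ``delicate'' but unsupported: with only ``internal vertices of degree at most $2$'' as a proven reducible configuration, long faces incident with many internal degree-$3$ vertices in triangles receive no contradiction, and no choice of rules will make all interior faces nonnegative. The paper's count is genuinely tight --- after the tetrad lemma one must still separately analyze faces of length $9$, $10$, $11$ (including $9$-faces with seven special vertices and faces meeting the precolored boundary), and even then the total charge is only forced to be at least $-11$ against an initial $-12$. In addition, your normalization ``$f_0\equiv 1$ after translation'' is inconsistent with having straightened a spanning subgraph of $G[S]$ (adjacent precolored vertices on a straight full edge cannot share a color), though this is a minor point compared with the two gaps above.
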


Let us remark that in Theorem~\ref{thm-maingen}, we can without loss of generality assume that the vertices of $S$ are incident
with the outer face of $G$.
Let $B=(G,S,C,\varphi_0)$, where $G$ is a plane graph without cycles of lengths $4$ to $8$, $S\subseteq V(G)$ consists either of at most one
vertex incident with the outer face of $G$ or of all vertices incident with the outer face of $G$, $C$ is
a $3$-correspondence assignment for $G$ such that $C$ is consistent on every closed walk of length three in $G$,
and $\varphi_0$ is a $C$-coloring of $G[S]$.  If $|S|\le 12$, we say that $B$ is a \emph{target}.
Let $e(B)=|E(G)|-|E(G[S])|$ and let $s(B)=(|V(G)|,e(B), -\sum_{uv\in E(G)} |E(C_{uv})|)$.

We say that a target $B$ is a \emph{counterexample} if there exists no $C$-coloring $\varphi$ of $G$ whose restriction
to $S$ is equal to $\varphi_0$.  
We say that a counterexample $B$ is a \emph{minimal counterexample} if $s(B)$ is lexicographically minimum among all counterexamples;
that is, $|V(G)|$ is minimized, and subject to that, the number of edges of $G$ that do not join the vertices of $S$ is minimized, and
subject to those conditions the total number of edges in the matchings of the $3$-correspondence assignment $C$ is \emph{maximized}.

In order to prove Theorem~\ref{thm-maingen}, we aim to show that no minimal counterexamples exist.  Let us start by
establishing some basic properties of hypothetical minimal counterexamples.
We need a few more definitions.  Given a cycle $K$ in a graph, an edge $e$ is a \emph{chord} of $K$ if both ends of $e$ belong to $V(K)$, but $e\not\in E(K)$.
Let $|K|$ denote the length of the cycle $K$ (the number of its edges).  If $G$ is a $2$-connected plane graph, then every face $f$
is bounded by a cycle; let $|f|$ denote the length of $f$ (which in this case equals the length of the boundary cycle of $f$) and
let $V(f)$ denote the set of vertices incident with $f$.  By an \emph{open disk}, we mean a subset of the plane homeomorphic to $\{(x,y):x^2+y^2<1\}$.

\begin{lemma}\label{lemma-basic}
If $B=(G,S,C,\varphi_0)$ is a minimal counterexample, then
\begin{itemize}
\item[\textrm{(a)}] $V(G)\neq S$,
\item[\textrm{(b)}] $G$ is $2$-connected,
\item[\textrm{(c)}] for any cycle $K$ in $G$ that does not bound the outer face, if $K$ has length at most $12$, then the open disk bounded by $K$
does not contain any vertex,
\item[\textrm{(d)}] if $e_1$ and $e_2$ are distinct chords of a cycle $K$ in $G$ of length at most $12$, then there does not exist a triangle containing both $e_1$ and $e_2$,
\item[\textrm{(e)}] all vertices of $G$ of degree at most $2$ are contained in $S$,
\item[\textrm{(f)}] the outer face $F$ of $G$ is bounded by an induced cycle and $S=V(F)$, and
\item[\textrm{(g)}] if $P$ is a path in $G$ of length $2$ or $3$ with both ends in $S$ and no internal vertex in $S$, then
no edge of $P$ is contained in a triangle that intersects $S$ in at most one vertex.
\end{itemize}
\end{lemma}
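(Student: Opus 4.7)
The plan is to prove all six clauses by minimum-counterexample analysis: for each clause, assume it fails, produce a target $B'=(G',S',C',f_0')$ with $s(B')<_{\mathrm{lex}} s(B)$, and then use the coloring of $B'$ granted by minimality to extend $f_0$ to a $C$-coloring of $G$, contradicting the counterexample assumption. Deleting vertices strictly decreases the first coordinate $|V(G)|$; enlarging $S$ so that $G[S]$ acquires new edges strictly decreases the second coordinate $e(B)$; and shrinking some $\dom(C_e)$ strictly decreases the third coordinate.

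Clause (a) is immediate: if $V(G)=S$, then $f_0$ itself colors $G$. For (e), a vertex $v\notin S$ of degree at most two can be deleted to obtain the smaller target $(G-v,S,C,f_0)$; by minimality it has a coloring extending $f_0$, and the two neighbors of $v$ forbid at most two of the three colors via $C$, leaving a valid choice at $v$. For (b), a cut vertex or multiple components let us split $G$ into two subgraphs meeting in at most one vertex, each a smaller target; color the side containing $S$ first, then color the other side using the color of the shared cut vertex as its singleton precoloring, and glue. For (c), a cycle $K$ of length at most $12$ not bounding the outer face and containing an interior vertex partitions $G$ into the two closed subgraphs $G_{\mathrm{out}}$ and $G_{\mathrm{in}}$ meeting in $V(K)$; both are strictly smaller targets (the inner one legal because $|V(K)|\le 12$), and successive minimality colorings glue along $V(K)$.

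For (d), two distinct chords of $K$ lying in one triangle $T$ must share a vertex since $T$ has only three vertices, so after relabeling $e_1=xy$, $e_2=xz$ with $T=xyz$. Writing $a,b,c$ for the lengths of the three arcs of $K$ between $x,y,z$ (so $a+b+c\le 12$), each cycle formed by one triangle edge together with an arc joining its endpoints on $K$ has length in $\{3\}\cup[9,\infty)$; a finite case analysis on whether $yz$ lies on $K$ or is itself a chord forces a cycle of forbidden length $4$ to $8$.

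For (f), $2$-connectivity from (b) ensures that $F$ is bounded by a cycle. If this cycle had a chord $uv$, it would split into two cycles with lengths summing to $|V(F)|+2$, each in $\{3\}\cup[9,\infty)$; applying (c) to whichever side has length at most $12$ together with simplicity rules this out. To prove $S=V(F)$, suppose $|S|\le 1$; using the reductions above together with the cycle-length constraint one argues $|V(F)|\le 12$ (since otherwise an internal structure contradicting (c) or (e) could be extracted), after which $f_0$ extends to a $C$-coloring $f_0'$ of the cycle $G[V(F)]$ (any cycle is $3$-correspondence-colorable from a partial precoloring). The resulting target $(G,V(F),C,f_0')$ has the same vertex set but $e(B')=e(B)-|V(F)|<e(B)$, contradicting minimality. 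The main obstacles I expect are the cycle-length bookkeeping in (d) and establishing the bound $|V(F)|\le 12$ in the $|S|\le 1$ case of (f); the latter is the delicate step, because enlarging $S$ incrementally is blocked by the restriction that a target must have $|S|\le 1$ or $S=V(F)$, so the argument must jump directly from a singleton $S$ to all of $V(F)$.
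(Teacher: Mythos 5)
Your treatments of (a), (b), (c), (e) follow the paper's reductions, and your arc-length arithmetic for (d) is a correct (if slightly more laborious) alternative to the paper's observation that any chord of a cycle of length at most $12$ must span exactly two consecutive edges of the cycle, so that its unique triangle has its other two edges on $K$ and hence contains no second chord. But the step you yourself flag as delicate in (f) is a genuine gap. When $|S|\le 1$, there is no way to ``argue $|V(F)|\le 12$'': nothing in (a)--(e) bounds the length of the outer face of a minimal counterexample with a single precolored vertex, and your parenthetical (``an internal structure contradicting (c) or (e) could be extracted'') is not an argument. Indeed, the paper never proves such a bound; its strategy is different. With $S=\{v\}$ it distinguishes two cases. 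If $v$ lies on some cycle of length at most $12$, then by (c) (and (d)) $v$ is incident with a \emph{face} $f$ of length at most $12$ --- not necessarily the original outer face --- and one \emph{redraws} $G$ so that $f$ becomes the outer face before precoloring $V(f)$; your plan of precoloring the original outer face $F$ only works if $F$ itself is short, which is not guaranteed. If $v$ lies on no cycle of length at most $12$, the paper adds a new edge $xy$ between the two outer-face neighbors of $v$, assigns it the empty correspondence (so no new coloring constraints arise and consistency of closed walks of length $3$ is vacuous on walks through $xy$), redraws so that the triangle $vxy$ is the outer face, and takes $S'=\{v,x,y\}$; this does not create cycles of lengths $4$ to $8$ (such a cycle through $xy$ would yield a cycle of length at most $9$ through $v$) and strictly decreases $e(B)$ because the one added edge is outweighed by the three edges now inside $G[S']$. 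This edge-addition device is exactly what lets the argument ``jump directly from a singleton $S$ to a full outer face,'' and it is missing from your proposal; without it (or some substitute) the $|S|\le 1$ case of (f) does not close.

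Two smaller points. In (b), when the cut vertex $v$ belongs to $S$, the set $S$ may meet both sides of the separation, so ``color the side containing $S$ first'' is not well defined; the paper instead colors each side extending the restriction of $f_0$ and glues at $v$, whose color is fixed by $f_0$. In the chordless part of (f), once $S=V(F)$ is known one has $|F|\le 12$, so a chord splits $F$ into two cycles both of length at most $12$; you need (c) applied to \emph{both} sides to get $V(G)=V(F)$ and contradict (a) --- emptiness of one side alone rules nothing out.
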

\begin{proof}
Since $\varphi_0$ does not extend to a $C$-coloring of $G$, (a) obviously holds.

If $G$ were not connected, then each component
would have a $C$-coloring extending the restriction of $\varphi_0$ to the component by the minimality of $B$, and thus $G$ would
have a $C$-coloring extending $\varphi_0$.  This contradiction shows that $G$ is connected.
Suppose that $G$ is not $2$-connected; then $G=G_1\cup G_2$ for proper induced subgraphs $G_1$ and $G_2$ intersecting in one vertex $v$.
If $v\in S$, then note that both $G_1$ and $G_2$ have a $C$-coloring extending $\varphi_0$ and that the $C$-colorings
match on $v$, and thus $G$ has a $C$-coloring extending $\varphi_0$, a contradiction.  If $v\not\in S$, then without loss of generality, $V(G_2)\cap S=\emptyset$.
By the minimality of $B$, there exists a $C$-coloring $\varphi_1$ of $G_1$ that extends $\varphi_0$.  Let $C'$ be the restriction of $C$ to $G_2$,
let $S'=\{v\}$, and let $\varphi_0'$ be the coloring of $S'$ such that $\varphi'_0(v)=\varphi_1(v)$.  By the minimality of $B$, the target
$(G_2,S',C',\varphi'_0)$ is not a counterexample, and thus $G_2$ has a $C'$-coloring $\varphi_2$ that extends $\varphi'_0$.
Note that $\varphi_1$ and $\varphi_2$ together give a $C$-coloring of $G$ that extends $\varphi_0$, a contradiction.  Thus in both cases we obtain a contradiction,
showing that (b) holds.

Suppose that $K$ is a cycle of length at most $12$ in $G$ that does not bound the outer face of $G$, and that the open disk $\Lambda$ bounded by $K$
contains a vertex of $G$.  Let $G_1$ be the subgraph of $G$ induced by the vertices drawn in the complement of $\Lambda$,
and let $G_2$ consist of the vertices and edges of $G$ drawn in the closure of $\Lambda$.  Note that $S\subseteq V(G_1)$.
By the minimality of $B$, $\varphi_0$ extends to a $C$-coloring $\varphi_1$ of $G_1$, and the restriction of $\varphi_1$ to $K$ extends
to a $C$-coloring $\varphi_2$ of $G_2$.  Note that $\varphi_1$ and $\varphi_2$ together give a $C$-coloring of $G$ that extends $\varphi_0$.
Thus we obtain a contradiction, showing that (c) holds.

Suppose that a cycle $K=v_1v_2\ldots v_t$ of length $t\le 12$ has a chord, which by symmetry we can assume to be $v_1v_j$ for
some integer $j$ such that $3\le j\le t/2+1$.  Since $G$ does not contain cycles of lengths $4$ to $8$, we conclude that $j=3$,
and thus the chord $v_1v_3$ is contained in the triangle $v_1v_2v_3$.  However, since $G$ does not contain $4$-cycles, it follows
that $v_1v_3$ is not contained in any other triangle, showing that (d) holds.

If $v\in V(G)\setminus S$ had degree at most two, then we can extend $\varphi_0$ to a $C$-coloring $\varphi$ of $G-v$ by the minimality
of $B$, and furthermore we can select a color $\varphi(v)$ for $v$ such that for each neighbor $u$ of $v$, we have
$(u,\varphi(u))(v,\varphi(v))\not\in E(C_{uv})$.  This gives a $C$-coloring of $G$ that extends $\varphi_0$, which is a contradiction.
Hence, (e) holds.

Suppose now that $|S|\le 1$.  If $S=\emptyset$, then we can include an arbitrary vertex of the outer face of $G$ in $S$, and thus
without loss of generality, we can assume that $S=\{v\}$ for some vertex $v\in V(G)$.  If $v$ is contained in a cycle of length at most
$12$ in $G$, then by (c) $v$ is incident with a face $f$ of length at most $12$ whose boundary cycle is induced in $G$.  We can redraw $G$ so that $f$ is the outer face,
choose a $C$-coloring $\varphi'_0$ of the boundary of $f$, and conclude that $B'=(G,V(f),C,\varphi'_0)$ is a counterexample contradicting the
minimality of $G$, since $e(B')<e(B)$.  Hence, all cycles containing $v$ have length at least $13$.  Let $x$ and $y$ be the neighbors of $v$ in the
outer face, let $G+xy$ be drawn so that its outer face is $vxy$, let $C'$ be obtained from $C$ by letting $C'_{xy}$ be an edgeless
graph, and let $\varphi'_0$ be any $C'$-coloring of $vxy$.  Then $B''=(G+xy,\{v,x,y\},C',\varphi'_0)$ contradicts the minimality of $B$,
since $e(B'')<e(B)$.

Hence, it follows that $S$ consists of the vertices of the outer face of $G$, which is bounded by a cycle $F$.  If $F$ were not induced,
then (c) would imply that $V(G)=V(F)=S$, contradicting (a).  Hence, (f) holds.

Finally, suppose that $P$ is a path in $G$ of length $p\in \{2,3\}$ with both ends in $S$ and no internal vertex in $S$, and that an edge of
$P$ is contained in a triangle $T$ that intersects $S$ in at most one vertex.  Let $K_1$ and $K_2$ be the cycles of $F\cup P$ distinct from $F$.
Suppose that say $K_1$ is a triangle.  Since $P$ is a path, and thus its ends are distinct, this implies that $P$ has length $2$ and $K_1$ intersects
$S$ in two vertices.  This is a contradiction, since $G$ does not contain $4$-cycles and an edge of $P$ is contained in a different triangle $T$ (that intersects $S$
in at most one vertex).  Hence, neither $K_1$ nor $K_2$ is a triangle, and since $G$ does not contain cycles of lengths $4$ to $8$, it
follows that $|K_1|, |K_2|\ge 9$.  On the other hand, $|K_1|+|K_2|=|F|+2p\le 18$, and thus $|K_1|=|K_2|=9$.  By (c), we have $V(G)=S\cup V(P)$.
However, then an edge of $T$ is a chord of $K_1$ or $K_2$, and this edge together with a subpath of $K_1$ or $K_2$ forms a cycle of length $6$, $7$ or $8$,
which is a contradiction.  Hence, (g) holds.
\end{proof}

Next, let us simplify the correspondence assignment by adding new correspondences if doing so does not violate the assumptions.

\begin{lemma}\label{lemma-exedm}
Let $B=(G,S,C,\varphi_0)$ be a minimal counterexample.  If $e=uv$ is an edge of $G$ that does not join two vertices of $S$,
then $|E(C_{uv})|\ge 2$.  If additionally $e$ is not contained in a triangle, then $e$ is full in the assignment $C$.
\end{lemma}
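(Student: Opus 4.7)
The plan is to derive a contradiction by augmenting $C$ with one additional correspondence on the edge $uv$, applying the minimality of $B$ to the augmented target, and pulling the resulting coloring back. Suppose the conclusion fails: either $|\dom(C_{uv})|\le 1$, or $uv$ is in no triangle and $|\dom(C_{uv})|\le 2$. I will construct a $3$-correspondence assignment $C'$ for $G$ agreeing with $C$ on every edge other than $uv$ and with $C'_{uv}$ obtained from $C_{uv}$ by adding one pair $(c_1,c_2)$, while maintaining injectivity of $C'_{uv}$ and consistency on every closed walk of length $3$. Since $uv\notin E(G[S])$, the coloring $f_0$ remains a $C'$-coloring of $G[S]$, so $B'=(G,S,C',f_0)$ is a target; only the third coordinate of $s$ changes between $B$ and $B'$, and it strictly decreases, so by minimality $B'$ is not a counterexample. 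Every $C'$-coloring extending $f_0$ is automatically also a $C$-coloring (since $C'_{uv}\supseteq C_{uv}$ as partial functions, a $C'$-coloring merely satisfies more constraints), contradicting that $B$ is a counterexample.

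All the content is therefore in showing that a suitable new pair $(c_1,c_2)$ exists. Injectivity requires $c_1\in[3]\setminus\dom(C_{uv})$ and $c_2$ outside the image of $C_{uv}$. Because $G$ has no $4$-cycles, $uv$ lies in at most one triangle. If it lies in none, the consistency requirement is vacuous, and any pair satisfying the domain/image conditions works; such a pair exists whenever $|\dom(C_{uv})|\le 2$, giving the second assertion of the lemma.

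Suppose instead $uv$ lies in a unique triangle $uvw$, and let $D_w:[3]\to[3]$ be the partial injection defined by $D_w(x)=C_{wu}(C_{vw}(x))$ wherever the right-hand side is defined. The hypothesis that $C$ is consistent on the closed walk $uvwu$ is precisely that $D_w(C_{uv}(c))=c$ for all $c\in\dom(C_{uv})$ with $C_{uv}(c)\in\dom(D_w)$, and adding a new pair $(c_1,c_2)$ preserves consistency exactly when $c_2\notin\dom(D_w)$, or $c_2\in\dom(D_w)$ and $D_w(c_2)=c_1$. For $|\dom(C_{uv})|=0$, at most $2|\dom(D_w)|\le 6$ of the nine candidate pairs are forbidden by this constraint, leaving at least three good choices. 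For $|\dom(C_{uv})|=1$ with existing pair $(c_1^\star,c_2^\star)$, the two admissible values $c_2\in[3]\setminus\{c_2^\star\}$ each either force $c_1=D_w(c_2)$ (when $c_2\in\dom(D_w)$) or admit any $c_1\in[3]\setminus\{c_1^\star\}$ otherwise; the sole obstruction would be both forced values of $c_1$ equalling $c_1^\star$, which injectivity of $D_w$ forbids.

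The step I expect to require the most care is this last subcase: verifying in the $|\dom(C_{uv})|=1$ case that, regardless of how $D_w$ interacts with $(c_1^\star,c_2^\star)$, at least one admissible new pair survives. Everything else---that $C'$ is a valid $3$-correspondence assignment, that $B'$ is a target, and that consistency on closed walks of length $3$ not involving $uv$ is automatic because those walks are unchanged---is routine.
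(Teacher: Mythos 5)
Your overall strategy (augment $C_{uv}$ by one pair, use the third coordinate of $s(B)$, pull the coloring back) is exactly the paper's, and your treatment of the triangle-free case is fine. The gap is in the triangle case: you impose only the consistency condition coming from the single closed walk $uvwu$, namely ``$c_2\notin\dom(D_w)$ or $D_w(c_2)=c_1$''. But the target condition requires consistency on \emph{every} closed walk of length $3$, and the walks $vwuv$, $wuvw$ (and their reversals) traverse the same triangle yet impose genuinely different conditions, because the $C_e$ are partial injections; the paper explicitly warns that consistency on $v_1v_2v_3v_1$ need not give consistency on $v_2v_3v_1v_2$. Concretely, take $C_{uv}=\{1\mapsto 1\}$, $C_{vw}=\{1\mapsto 1,\,2\mapsto 2\}$, $C_{wu}=\{1\mapsto 1,\,3\mapsto 2\}$: this is consistent on all closed walks of length $3$, and your criterion (here $D_w=\{1\mapsto1\}$, so $c_2\in\{2,3\}$ is never in $\dom(D_w)$) declares the pair $(c_1,c_2)=(2,2)$ admissible; but after setting $C'_{uv}(2)=2$ the walk $wuvw$ gives $3\mapsto C_{wu}(3)=2\mapsto C'_{uv}(2)=2\mapsto C_{vw}(2)=2\neq 3$, so $C'$ is not consistent and $B'$ is not a target. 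Hence your counting of forbidden pairs (``at most $2|\dom(D_w)|$'' when $\dom(C_{uv})=\emptyset$, and the ``sole obstruction'' analysis when $|\dom(C_{uv})|=1$) is carried out against the wrong constraint set, and the existence of an admissible pair is not established.

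This is where the real work of the lemma lies, and the paper does it differently: when $\dom(C_{uv})=\emptyset$ it avoids the question entirely by deleting the edge $uv$ (which strictly decreases $e(B)$, the second coordinate of $s$, and any $C$-coloring of $G-e$ is vacuously a $C$-coloring of $G$); and when $|\dom(C_{uv})|=1$ it normalizes $C_{uv}(1)=1$, considers all four candidate additions $C^{a,b}$ with $a,b\in\{2,3\}$, and derives a contradiction from the assumption that each of them is inconsistent on \emph{some} length-$3$ closed walk around the triangle, using the consistency of the original $C$ on all such walks (including walks with other base points, e.g.\ $vuwv$) and a pigeonhole argument on $\dom(C_{uw})\cap\dom(C_{vw})$. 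Your conclusion is correct, but to repair your proof you would need to redo the admissibility analysis against all three rotations (reversals then come for free), which essentially reproduces the paper's case analysis rather than the two-line count you give.
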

\begin{proof}
If $e$ is not contained in a triangle and $e$ is not full, then let $c_1$ and $c_2$ be colors such that $(u,c_1)$ and $(v,c_2)$
are isolated vertices of $C_{uv}$.  Let $C'$ be the $3$-correspondence assignment obtained from $C$ by adding edge $(u,c_1)(v,c_2)$
to $C_{uv}$.  Then $(G,S,C',\varphi_0)$ is a counterexample contradicting the minimality of $B$.

Suppose now that $e$ is an edge of a triangle $T=uvw$ (the triangle is unique, since $G$ does not contain $4$-cycles),
and that $|E(C_{uv})|\le 1$.
If $E(C_{uv})=\emptyset$, then $(G-e,S,C,\varphi_0)$ is a counterexample contradicting the
minimality of $B$.  Otherwise, by Observation~\ref{obs-equiv} and Lemma~\ref{lemma-tree}, we can assume that
$C_{uv}$ contains only the edge $(u,1)(v,1)$.  For $a,b\in \{2,3\}$, let $C^{a,b}$ denote the $3$-correspondence assignment obtained from $C$
by adding edge $(u,a)(v,b)$ to $C_{uv}$, and let $B^{a,b}=(G,S,C^{a,b},\varphi_0)$.  Note that $s(B^{a,b})$ is lexicographically smaller than $s(B)$,
since the total number of edges in the matchings of the $3$-correspondence assignment $C^{a,b}$ is larger than the total number of edges in the matchings of $C$.
Since $G$ has no $C$-coloring, it also has no $C^{a,b}$-coloring.  However, the minimality of $B$ implies that $B^{a,b}$ is not a counterexample,
and thus $B^{a,b}$ is not a target; i.e., $T$ contains a closed walk that is not consistent in $C^{a,b}$.

Suppose that $(u,2)$ is isolated in $C_{uw}$.  Then, for $b\in \{2,3\}$, the only closed walk of length $3$
on which $C^{2,b}$ may be inconsistent is $uvwu$, and thus $C_{vw}\cup C_{wu}$ contains a path $(v,b)(w,d_b)(u,c_b)$ for some colors $d_b$ and $c_b\neq 2$.
Note that $c_2\neq c_3$, and by symmetry we can assume $c_2=1$.  But then the path $(v,1)(u,1)(w,d_2)(v,2)$ in $C_{vu}\cup C_{uw}\cup C_{wv}$
shows that $C$ is inconsistent on $vuwv$, contrary to the assumption that $C$ is consistent on closed walks of length $3$.

Hence, $(u,2)$ is not isolated in $C_{uw}$, and by symmetry, $(u,3)$ is not isolated in $C_{uw}$,
and $(v,2)$ and $(v,3)$ are not isolated in $C_{vw}$.  By the pigeonhole principle, there exist colors $c_u,c_v\in \{2,3\}$ and $c_w$
such that $C_{uw}\cup C_{wv}$ contains a path $(u,c_u)(w,c_w)(v,c_v)$. In that case $C^{c_u,c_v}$ is consistent on all closed walks
of length $3$, which is a contradiction.
\end{proof}

We can strengthen the previous result in the case of triangles that contain two vertices of degree three,
which appear in the main reducible configuration of our proof.

\begin{lemma}\label{lemma-exed}
Let $B=(G,S,C,\varphi_0)$ be a minimal counterexample, and let $T$ be a triangle in $G$
that has at least two vertices of degree three not belonging to $S$.
Then all the edges of $T$ are full in the assignment $C$.
\end{lemma}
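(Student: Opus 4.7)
The plan is to adapt the approach of Lemma~\ref{lemma-exedm}: suppose for contradiction that some edge $e$ of $T=uvw$ is not full. By Lemma~\ref{lemma-exedm}, $|\dom(C_e)|=2$, so there is a unique completion of $C_e$ to a bijection; call the resulting assignment $C'$. Since $G$ contains no $4$-cycle, $T$ is the only triangle through $e$, so $C'$ is consistent on every length-$3$ closed walk not traversing $T$. If $C'$ is also consistent on both orientations of $T$, then $(G,S,C',f_0)$ is a target with the same $|V|$ and $|E|$ but larger $\sum|\dom(C_e)|$, hence strictly smaller $s$-value; by minimality it admits a $C'$-coloring, which is automatically a $C$-coloring extending $f_0$---contradiction.

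So assume $C'$ is inconsistent on $T$. Using Lemmas~\ref{lemma-equiv} and \ref{lemma-tree}, I would straighten an appropriate spanning tree of $T$ to put $C$ into a canonical form, and exploit the consistency of $C$ on \emph{both} orientations of $T$ (two distinct length-$3$ closed walks) to eliminate most local configurations of the partial correspondences on the edges of $T$. Simultaneously, letting $u',v'$ denote the third neighbors of $u,v$, I would note that $u'\ne v'$ and neither of them is adjacent to $w$, since otherwise $G$ would contain a $4$-cycle through $T$; hence the edges $uu'$ and $vv'$ lie in no triangle and are full by Lemma~\ref{lemma-exedm}.

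For each surviving configuration I would apply minimality to $(G-\{u,v\},S,C|_{G-\{u,v\}},f_0)$, which is a target with strictly smaller $|V|$ and therefore admits a $C$-coloring $f^*$ extending $f_0$. Extending $f^*$ to $u$ and $v$ amounts to choosing $(f(u),f(v))\in[3]^2$ compatible with $f^*(w),f^*(u'),f^*(v')$ and the constraint across the edge $uv$. Fullness of $uu'$ and $vv'$ forbids exactly one value at each of $u,v$ coming from these neighbors, and the edge $uv$, with $|\dom(C_{uv})|=2$, forbids only the at most two diagonal pairs $(c,c)$ with $c\in\dom(C_{uv})$; a short count then shows that a valid extension exists unless both allowed sets $A_u,A_v\subseteq[3]$ collapse to the same singleton $\{c\}$ with $c\in\dom(C_{uv})$.

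The main obstacle I anticipate is ruling out this stuck case. Stuckness forces $f^*(w),f^*(u'),f^*(v')$ to take very specific values determined by the correspondences, and the plan is to show that these forced values either contradict the restricted local structure obtained from the canonical form above, or fail to hold in \emph{every} valid $f^*$. If a direct algebraic contradiction proves elusive, the backup plan is to apply minimality instead to $G-u$: if some coloring of $G-u$ extending $f_0$ has $f^*(v)=3$ (the color absent from $\dom(C_{uv})$ after straightening), then the edge $uv$ imposes no constraint on $f(u)$ and the extension is automatic; otherwise every such coloring has $f^*(v)\in\{1,2\}$, which is a strong structural constraint that I expect to leverage against the existence of $f^*$ via one further reduction.
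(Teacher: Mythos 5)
There is a genuine gap, and it sits exactly where you anticipate: the ``stuck case'' is a real obstruction, not something a sharper count or your $G-u$ fallback will dispose of. Concretely, take $C_{uu'}$, $C_{vv'}$, $C_{uw}$, $C_{vw}$ straight and full and $C_{uv}$ straight with $\dom(C_{uv})=\{1,2\}$. This is consistent on every closed walk of length three and compatible with everything you have established up to that point (Lemma~\ref{lemma-exedm}, straightening via Lemmas~\ref{lemma-equiv} and~\ref{lemma-tree}, $u'\neq v'$ and $u',v'$ nonadjacent to $w$). If the coloring $f^*$ of $G-\{u,v\}$ handed to you by minimality happens to have $f^*(w)=2$ and $f^*(u')=f^*(v')=3$, then $A_u=A_v=\{1\}$ and the pair $(1,1)$ is forbidden across $uv$, so no extension exists. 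Minimality gives no control over which $f^*$ you receive, and $w,u',v'$ may have arbitrary degree, so you cannot recolor them. The backup plan has the same defect: minimality applied to $G-u$ only yields \emph{some} coloring, and the observation that ``every such coloring has $f^*(v)\in\{1,2\}$'' is never converted into a contradiction. So the delete-and-extend scheme, as described, cannot be completed; the remaining case is precisely the hard part of the lemma.

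The paper closes this by never deleting $u$ and $v$. After straightening the two pendant edges and two of the three triangle edges (so that at most one edge of $T$ can fail to be straight in $C$), it first uses consistency of $C$ on $T$ together with the third coordinate of $s(B)$ (adding a missing pair to the non-straight edge is a legal strictly ``smaller'' target) to force that edge to fix the common color $1$. It then replaces $C$ on the three edges of $T$ by an assignment $D$ that is straight and full on $T$ and agrees with $C$ elsewhere; since some edge of $T$ was not full, $\sum_e|\dom|$ strictly increases, so minimality yields a $D$-coloring $f'$ of \emph{all} of $G$ extending $f_0$. Such an $f'$ can violate $C$ only on the single possibly non-straight edge of $T$, which pins down the colors involved; then either recoloring just the two degree-three vertices (using their straight, full pendant edges) repairs $f'$ into a $C$-coloring, or $C$ is forced to be inconsistent on $T$, a contradiction. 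The idea your plan is missing is a mechanism of this kind --- a correspondence replacement on $T$ (or, as in Lemma~\ref{lemma-tetrad}, an identification) that keeps the rest of the graph colored while preserving enough slack at the triangle to repair the one possible conflict; deleting $u$ and $v$ outright discards exactly that slack.
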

\begin{proof}
Suppose for a contradiction that not all edges of $T$ are full.
Let $T=v_1v_2v_3$, where $v_1,v_2\not\in S$ have degree three.
For $i=1,2$, let $x_i$ be the neighbor of $v_i$ outside of the triangle.
By Observation~\ref{obs-equiv} and Lemma~\ref{lemma-tree}, we can assume that the edges $x_1v_1$, $x_2v_2$, $v_1v_2$ and
$v_1v_3$ are straight in $C$. By Lemma~\ref{lemma-exedm} and the pigeonhole principle, there exists a color $c$ such that $(v_1,c)$ is isolated neither
in $C_{v_1v_2}$ nor in $C_{v_1v_3}$; by symmetry, we can assume that $c=1$, and since the edges are straight,
$(v_1,1)(v_2,1)\in E(C_{v_1v_2})$ and $(v_1,1)(v_3,1)\in E(C_{v_1v_3})$.

Since $C$ is consistent on walks $v_2v_3v_1v_2$ and $v_3v_2v_1v_3$, either both $(v_2,1)$ and $(v_3,1)$ are
isolated in $C_{v_2v_3}$, or $(v_2,1)(v_3,1)\in E(C_{v_2v_3})$.
In the former case, letting $C'$ be the $3$-correspondence assignment obtained from $C$ by adding the edge $(v_2,1)(v_3,1)$
to $C_{v_2v_3}$, we find that $(G,S,C',\varphi_0)$ contradicts the minimality of $B$; hence, we may assume the latter case holds.

Let $D$ be the $3$-correspondence assignment for $G$ that matches $C$ on $E(G)\setminus E(T)$ and has the property that all edges of $T$
are straight and full in $D$.  By the minimality of $B$, we conclude that there exists a $D$-coloring $\varphi'$ of $G$ that extends $\varphi_0$,
and since $B$ is a counterexample, $\varphi'$ is not a $C$-coloring of $G$.  Since $D$ differs from $C$ only on the edges of $T$ and all edges of $T$ other than $v_2v_3$ are straight in $C$,
by symmetry we can assume that $\varphi'(v_2)=2$, $\varphi'(v_3)=3$ and $(v_2,2)(v_3,3)\in E(C_{v_2v_3})$.
If $(v_3,3)$ is isolated in $C_{v_3v_1}$, then we can modify $\varphi'$ to a $C$-coloring of $G$ by recoloring $v_2$ by a color $c$
in $\{1,3\}$ different from $\varphi'(x_2)$
and by recoloring $v_1$ by a color different from $c$ and $\varphi'(x_1)$.

Hence, we can assume that $(v_3,3)$ is
not isolated in $E(C_{v_3v_1})$, and since the edge $v_1v_3$ is straight in $C$, we have $(v_3,3)(v_1,3)\in E(C_{v_3v_1})$.
But, by Lemma~\ref{lemma-exedm}, we have either $(v_1,2)(v_2,2)\in E(C_{v_1v_2})$ or $(v_1,3)(v_2,3)\in E(C_{v_1v_2})$.
In either case, $C$ is not consistent on all closed walks of length $3$ in $T$, which is a contradiction.
\end{proof}

\begin{figure}
\begin{center}
\includegraphics{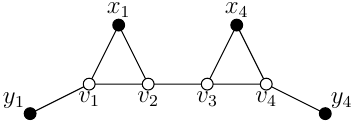}
\end{center}
\caption{A tetrad.}\label{fig-tetrad}
\end{figure}

Finally, we are ready to deal with the main reduction of the proof (inspired by~\cite{bor47}).
A \emph{tetrad} in a plane graph is a path $v_1v_2v_3v_4$ of vertices of degree three contained in the boundary of a
face, such that both $v_1v_2$ and $v_3v_4$ are edges of triangles, see Figure~\ref{fig-tetrad}.

\begin{lemma}\label{lemma-tetrad}
If $B=(G,S,C,\varphi_0)$ is a minimal counterexample, then every tetrad in $G$ contains a vertex of $S$.
\end{lemma}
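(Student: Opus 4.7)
Suppose for contradiction that $B=(G,S,C,f_0)$ is a minimal counterexample containing a tetrad $v_1v_2v_3v_4$ disjoint from $S$, and let $w_1, w_2$ be the third vertices of the triangles on $v_1v_2$ and $v_3v_4$, respectively. I would first establish the local structure: using Lemma~\ref{lemma-basic} and the exclusion of cycles of lengths $4$ to $8$, check that $v_1,v_2,v_3,v_4,w_1,w_2$ are six distinct vertices, that no edges join them other than those of the tetrad and the two triangles, and that in particular the edge $v_2v_3$ lies in no triangle. The same no-short-cycle hypothesis forces the shortest $w_1$-to-$w_2$ (and $v_1$-to-$v_4$) path in $G-\{v_2,v_3\}$ to have length at least $6$, since any shorter one would close up with the path $v_2v_3$ into a $4$- to $8$-cycle in $G$.

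Next, I would apply Lemma~\ref{lemma-exed} to make all six edges of the two triangles full, and Lemma~\ref{lemma-exedm} to make $v_2v_3$ full (since it lies in no triangle). Let $H$ consist of both triangles together with the edge $v_2v_3$; its only cycles are the two triangles, which are full and consistent by the target hypothesis on length-$3$ closed walks. Lemmas~\ref{lemma-equiv} and~\ref{lemma-tree} then let me assume all edges of $H$ are straight in $C$ while the vertices outside $H$ are fixed. Under this normalization the triangle constraints force any $C$-coloring $f$ of $G$ to take at $v_2$ the unique color in $[3]\setminus\{f(v_1),f(w_1)\}$, and similarly at $v_3$ the unique color in $[3]\setminus\{f(v_4),f(w_2)\}$. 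Hence a partial $C$-coloring of $G-\{v_2,v_3\}$ extends to $G$ if and only if $\{f(v_1),f(w_1)\}\neq\{f(v_4),f(w_2)\}$ as two-element subsets of $[3]$.

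The reduction step is to build a strictly smaller target that encodes this outer inequality. I would delete $v_2$ and $v_3$, identify one vertex of $\{v_1,w_1\}$ with one of $\{v_4,w_2\}$ (say $v_1$ with $v_4$, into a new vertex $x$), and add a straight full edge between the two remaining vertices (here $w_1w_2$); the new correspondence $C'$ agrees with $C$ on surviving edges. This produces a triangle $xw_1w_2$ in $G'$ whose properness forces $f'(x),f'(w_1),f'(w_2)$ pairwise distinct, and pulling back yields precisely the required inequality on $v_1,w_1,v_4,w_2$. A $C'$-coloring of $G'$, guaranteed by minimality of $B$ provided $(G',S,C',f_0)$ is a valid target, then extends to a $C$-coloring of $G$ matching $f_0$ on $S$, contradicting that $B$ is a counterexample. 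The principal obstacle is verifying that $G'$ contains no cycles of lengths $4$ to $8$ and that $C'$ remains consistent on every length-$3$ closed walk: the identification of $v_1$ with $v_4$ creates a new cycle in $G'$ of length equal to each $v_1$-to-$v_4$ path in $G-\{v_2,v_3\}$, and the lower bound of $6$ derived in the first paragraph still allows forbidden lengths $6$, $7$, and $8$. Ruling these out, or adapting the choice of identification, will be the technical heart of the proof and will proceed by a case analysis on the structure of the face bounding the tetrad, using planarity and Lemma~\ref{lemma-basic}(c)--(d) to preclude the bad short paths through $w_1$ and $w_2$, following the spirit of the ordinary-coloring reduction of Borodin et al.~\cite{bor47}.
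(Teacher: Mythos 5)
Your reduction has a genuine gap, and it sits exactly at the point you defer as ``the technical heart''. Identifying $v_1$ with $v_4$ (or, more generally, any vertex of $\{v_1,w_1\}$ with any vertex of $\{v_4,w_2\}$) cannot work as stated: any two such vertices are joined through the tetrad by a path of length $3$ (e.g.\ $v_1v_2v_3v_4$ or $w_1v_2v_3w_2$), so the absence of cycles of lengths $4$ to $8$ in $G$ only forces their distance in $G-\{v_2,v_3\}$ to be at least $6$. A $v_1$--$v_4$ path of length $6$, $7$ or $8$ avoiding $v_2,v_3$ corresponds to a perfectly legal cycle of length $9$, $10$ or $11$ in $G$, yet becomes a forbidden $6$-, $7$- or $8$-cycle in $G'$; the added edge $w_1w_2$ has the same problem (a $w_1$--$w_2$ path of length $6$ or $7$ closes into a $7$- or $8$-cycle). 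Lemma~\ref{lemma-basic}(c)--(d) cannot exclude these situations, since the offending cycles of $G$ have length $9$--$11$ and may simply bound faces, so minimality cannot be invoked for $(G',S,C',f_0)$, and no choice among the four identifications you allow escapes this, because every such pair has through-the-tetrad distance $3$.

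The paper's proof avoids this with a different surgery: it deletes all four vertices $v_1,\dots,v_4$ and identifies $y_1$ (the neighbor of $v_1$ outside the tetrad and its triangle) with $x_4$ (your $w_2$), adding no edge. These two vertices are joined through the deleted part by a path of length $4$ ($y_1v_1v_2v_3x_4$), so a connecting path of length at most $8$ in $G-\{v_1,\dots,v_4\}$ would close into a cycle of length at most $12$, which Lemma~\ref{lemma-basic}(c)--(d) (interior emptiness plus the chord condition, using $v_4$, $x_1$ and planarity) rules out; hence the identification creates no cycle of length at most $8$ and no new inconsistent triangle. The coloring is then recovered by coloring $v_4$, then $v_3$ (which is thereby forced to differ from $f(x_4)=f(y_1)$), and then $v_1,v_2$ greedily. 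A second omission: you never treat the interaction with $S$. In your construction, if $w_1,w_2\in S$ and $f_0$ gives them the same color after straightening, the added edge $w_1w_2$ makes $f_0$ fail to be a $C'$-coloring of $G'[S]$, and more generally identifications touching $S$ must be justified; the paper devotes the first part of its proof to exactly this boundary case, using $|S|\le 12$ and Lemma~\ref{lemma-basic}(c)--(d) to reduce to the situation where $x_4\notin S$ and either $y_1\notin S$ or $x_4$ has no neighbor in $S$. Your extension criterion for $G-\{v_2,v_3\}$ is correct as far as it goes, but without resolving the short-cycle and boundary issues the reduction does not go through.
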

\begin{proof}
Suppose for a contradiction that $v_1v_2v_3v_4$ is a tetrad in $G$ disjoint from $S$.  Let $v_1v_2x_1$ and $v_3v_4x_4$ be triangles,
and for $i=1,4$, let $y_i\neq x_i$ be the other neighbor of $v_i$ not in the tetrad.  Since $G$ has no cycles of lengths $4$ to $8$, all these
vertices are pairwise distinct.

Considering the path $x_1v_2v_3x_4$, Lemma~\ref{lemma-basic}(g) implies that at most one of $x_1$ and $x_4$ belongs to $S$.
If say $x_1\in S$, then $x_4\not\in S$ and Lemma~\ref{lemma-basic}(g) applied to the path $y_1v_1x_1$ implies that $y_1\not\in S$.
If $x_1,x_4\not\in S$ and $y_1,y_4\in S$, then Lemma~\ref{lemma-basic}(g) implies that neighbors of $x_1$ and $x_4$ do not belong to $S$.
Hence, we can by symmetry assume that $x_4\not\in S$ and either $y_1\not\in S$, or $x_4$ has no neighbor in $S$.

Note that by Lemma~\ref{lemma-basic}(c) and (d), $G-\{v_1,v_2,v_3,v_4\}$ does not contain any path of length at most $8$
between $y_1$ and $x_4$ (such a path together with the path $y_1v_1v_2v_3x_4$ would form a cycle $K$ of length at most $12$
with two of the edges of one of the triangles $v_1v_2x_1$ or $v_3x_4v_4$ contained in the open disk bounded by $K$).
By Lemmas~\ref{lemma-exedm} and \ref{lemma-exed}, all edges of $G$ incident with $\{v_1, \ldots, v_4\}$ are full.
By Observation~\ref{obs-equiv} and Lemma~\ref{lemma-tree}, we can also assume that
the edges incident with $\{v_1, \ldots, v_4\}$ are straight.  Let $G'$ be obtained from $G-\{v_1,\ldots,v_4\}$ by identifying
$y_1$ with $x_4$, and let $C'$ be the restriction of $C$ to $E(G')$.  Note that the identification does not create an edge between
vertices of $S$, and thus $\varphi_0$ is a $C'$-coloring of the subgraph of $G'$ induced by $S$.  Also, the identification does not create any cycle of length
at most $8$, and thus $G'$ contains no cycles of lengths $4$ to $8$ and $C'$ is consistent on all closed walks of length three in $G'$.

By the minimality of $B$, we conclude that $G'$ has a $C'$-coloring that extends $\varphi_0$.
Consequently, $G-\{v_1,\ldots,v_4\}$ has a $C'$-coloring $\varphi$ extending $\varphi_0$ such that $\varphi(y_1) = \varphi(x_4)$.
We can extend $\varphi$ to a $C$-coloring of $G$,
by coloring $v_4$ and $v_3$ in order by colors distinct from the colors of their neighbors, and then choosing colors for
$v_1$ and $v_2$; the last part is possible, since all edges incident with $v_1$ and $v_2$ are straight and $y_1$ and $v_3$
have different colors.  This is a contradiction.
\end{proof}

We say that a vertex $v$ of a target $B=(G,S,C,\varphi_0)$ is \emph{light} if $\deg(v)=3$, $v$ is incident with a triangle,
and $v\not\in S$.  Lemma~\ref{lemma-tetrad} limits the number of light vertices incident with each face as follows.

\begin{corollary}\label{cor-consec}
If $B=(G,S,C,\varphi_0)$ is a minimal counterexample, then no face of $G$ is incident with five consecutive light vertices.
Furthermore, if a face of $G$ is incident with consecutive vertices $v_0$, $v_1$, \ldots, $v_5$ and the vertices
$v_1, \ldots, v_4$ are light, then the edges $v_0v_1$, $v_2v_3$, and $v_4v_5$ are incident with triangles.
\end{corollary}
\begin{proof}
Suppose that a face $f$ is incident with consecutive light vertices $v_1,\ldots, v_5$.  Since $v_3$ is incident with a triangle and
$\deg(v_3)=3$, we can by symmetry assume that the edge $v_3v_4$ is incident with a triangle.
Since $G$ does not contain $4$-cycles and $\deg(v_3)=3$, the edge $v_2v_3$ is not incident with a triangle, and thus the triangle incident with $v_2$
contains the edge $v_1v_2$.  However, then $v_1v_2v_3v_4$ is a tetrad contradicting Lemma~\ref{lemma-tetrad}.

Similarly, if a face is incident with consecutive vertices $v_0$, $v_1$, \ldots, $v_5$ and the vertices $v_1, \ldots, v_4$ are light, then
either edges $\{v_1v_2,v_3v_4\}$, or $\{v_0v_1,v_2v_3,v_4v_5\}$ are incident with triangles, and the former is not possible
by Lemma~\ref{lemma-tetrad}.
\end{proof}

We can now conclude the proof by a quite straightforward discharging argument.

\begin{proof}[Proof of Theorem~\ref{thm-maingen}]
Suppose for a contradiction that Theorem~\ref{thm-maingen} is false, and thus there exists a target $B=(G,S,C,\varphi_0)$ such that
$\varphi_0$ does not extend to a $C$-coloring of $G$, i.e., $B$ is a counterexample.  Choose such a counterexamle $B$ with $s(B)$ lexicographically minimum,
so that $B$ is a minimal counterexample.

Let us now assign the initial charge $\ch_0(v)=2\deg(v)-6$ to each vertex $v\in V(G)$ and $\ch_0(f)=|f|-6$ to each face $f$ of $G$.
By Euler's formula, the sum of the charges is $-12$.  The charge is then redistributed according to the following rules:
\begin{itemize}
\item[(R1)] Each vertex incident with a non-outer face $f$ of length three sends $1$ to $f$.
\item[(R2)] Each face of length at least $9$ sends $1/2$ to each incident light vertex.
\item[(R3)] Let $v$ be a vertex of degree at least $4$ and let $f$ be a face of length at least $9$ incident with $v$.
If $\deg(v)\ge 5$, then $v$ sends $1/2$ to $f$.
If $\deg(v)=4$ and $v$ is incident with exactly one triangle $vxy$ and either the edge $vx$ or the edge $vy$ is incident with $f$,
then $v$ also sends $1/2$ to $f$.
\item[(R4)] If $v\in S$ has degree two, then the non-outer face incident with $v$ sends $1/2$ to $v$.
\end{itemize}
This does not affect the total amount of the charge, and thus the sum of the charges is still $-12$.
Let the resulting final charge be denoted by $\ch$.
Next, we discuss the final charge of vertices and faces of $G$.

Let us first consider a vertex $v\in V(G)$, and
let $t$ denote the number of triangles containing $v$.  Since $G$ does not contain $4$-cycles, we have $t\le \deg(v)/2$,
and thus $v$ sends at most $t+(\deg(v)-t)/2\le \frac{3}{4}\deg(v)$ by rules (R1) and (R3).  Hence, if $\deg(v)\ge 5$,
then $\ch(v)\ge \ch_0(v)-\frac{3}{4}\deg(v)=\frac{5}{4}\deg(v)-6>0$.
If $\deg(v)\in\{3,4\}$ and $t=0$, then $\ch(v)=\ch_0(v)\ge0$. 
If $\deg(v)=3$ and $t=1$, then by (R1) and (R2), we have $\ch(v)=0$ if $v\not\in S$ and $\ch(v)=-1$ if $v\in S$.
If $\deg(v)=4$ and $t=1$, then $\ch(v)=\ch_0(v)-1-2\times\frac{1}{2}=0$ by (R1) and (R3).
If $\deg(v)=4$ and $t=2$, then $\ch(v)=\ch_0(v)-2\times 1=0$ by (R1).  Finally, if $\deg(v)=2$, then by
Lemma~\ref{lemma-basic}(e) we have $v\in S$, and $\ch(v)=-3/2$ by (R4).
Hence, 
\begin{itemize}
\item[($\dagger$)] \emph{each vertex $v\in V(G)$ satisfies $\ch(v)\ge 0$ if $v\not\in S$ or if $v$ has degree at least four,
$\ch(v)\ge -1$ if $v\in S$ and $v$ has degree three, and $\ch(v)\ge -3/2$ if $v\in S$ and $v$ has degree two.}
\end{itemize}

Furthermore, we claim that
\begin{itemize}
\item[($\star$)] \emph{every face $f$ of $G$ distinct from the outer face $F$ has non-negative final charge.}
\end{itemize}
\begin{subproof}
If $|f|=3$, then $\ch(f)=\ch_0(f)+3=0$ by (R1).
So we may suppose that $|f|\ge 9$.  Let $R$ denote the set of light vertices incident with $f$, and
let $s$ denote the number of vertices of $f$ of degree two.  Note that $|R|+s\le |f|$.  Also, since both neighbors of
a vertex of degree two belong to $S$ and $G$ is $2$-connected, if $s\neq 0$, then $f$ is incident with at least two
vertices of $S$ of degree at least three, and $|R|+s\le |f|-2$.  By (R2) and (R4),
$\ch(f)\ge \ch_0(f)-(|R|+s)/2\ge |f|/2-6$.  Hence, $\ch(f)\ge 0$ unless $|f|=11=|R|+s$, or $|f|=10$ and $|R|+s\ge 9$,
or $|f|=9$ and $|R|+s\ge 7$.

\begin{figure}
\begin{center}
\includegraphics{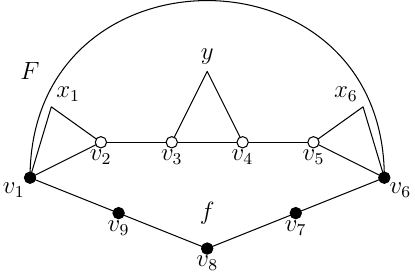}
\end{center}
\caption{The last case in the proof of Theorem~\ref{thm-maingen}.}\label{fig-lastcase}
\end{figure}

If $|f|=11$ and $|R|+s=11>|f|-2$, or $|f|=10$ and $|R|+s\ge 9>|f|-2$, then $s=0$ and at most one vertex incident with $f$ does not belong to $R$;
this contradicts Corollary~\ref{cor-consec}.  Therefore, $|f|=9$ and $|R|+s\ge 7$.  Let us discuss two subcases.
\begin{itemize}
\item Suppose $s=0$. Then $|R|\ge 7$.  By Corollary~\ref{cor-consec}, no 5 consecutive vertices of $f$ belong to $R$.
Hence, we may assume that $|R|=7$ and we can label the vertices of $f$ by $v_1$, \ldots, $v_9$ in order so that $v_1,v_6\not\in R$ and $v_2,\ldots, v_5,v_7,\ldots,v_9\in R$.
By Corollary~\ref{cor-consec}, the edges $v_1v_2$, $v_3v_4$ and $v_5v_6$ are incident with triangles, and by symmetry,
we can assume that $v_7v_8$ is incident with a triangle.  Since $v_7$ has degree three, the edge $v_6v_7$ is not
incident with a triangle.  Since $v_5,v_7\not\in S$, we conclude that $v_6$ cannot both have degree three and belong to $S$.
Since $v_6$ is not light, it follows that $v_6$ has degree at least $4$ and it sends $1/2$ to $f$ by (R3).
Hence, $\ch(f)=\ch_0(f)-|R|/2+1/2=0$.
\item Suppose $s>0$. Then we have $|R|+s=7$.  Consequently, $f$ is incident with exactly two vertices of $S$ of degree at least three,
and thus the vertices of degree $2$ are consecutive in the boundary of $f$.  If $s=7$, then $V(f)\subseteq V(F)$;
but then either $V(G)=S$ or $F$ has a chord, which contradicts Lemma~\ref{lemma-basic}(a) and (f).  Hence, $s<7$
and $R\neq\emptyset$.  Let $P$ be the path obtained from the boundary cycle of $f$ by
removing the vertices of degree two (all the internal vertices of $P$ belong to $R$), and let $K$ be the cycle in $F\cup P$
distinct from $F$ and $f$.  Note that $|K|+|f|=|F|+2|R|+2$, and thus $|K|=|F|+2|R|-7$.

If $|R|\le 3$, then $|K|<|F|\le 12$, and by Lemma~\ref{lemma-basic}(c), we have $V(G)=V(F)\cup R$.  Since all vertices of $R$
have degree three, each of them is incident with a chord of $K$.  Consequently, $K$ is not a triangle. If $|R|\le 2$, then $|K|\le |F|-3\le 9$,
and since $K$ has a chord, we conclude that $G$ contains a cycle of length $4$ to $8$, which is a contradiction.  If $|R|=3$, then
the middle vertex of $P$ is incident with a triangle whose two edges are chords of $K$, which contradicts Lemma~\ref{lemma-basic}(d).

Hence, we may assume that $|R|\ge 4$, and by Corollary~\ref{cor-consec} we have $|R|=4$.  Let us label the vertices of $f$ by $v_1$, \ldots $v_9$
in order so that $R=\{v_2,v_3,v_4,v_5\}$.  Corollary~\ref{cor-consec} implies that $v_1v_2$, $v_3v_4$ and $v_5v_6$ are incident with
triangles $v_1v_2x_1$, $v_3v_4y$ and $v_5v_6x_6$, see Figure~\ref{fig-lastcase}.  If $v_1$ or $v_6$ has degree at least $4$, then it sends $1/2$ to $f$ by (R3),
and $\ch(f)\ge \ch_0(f)-(|R|+s)/2+1/2=0$.  Otherwise, $x_1,x_6\in V(F)$.  Consider the cycle $K_1$ in $F\cup x_1v_2v_3v_4v_5x_6$ such that the open disk bounded by
$K_1$ does not contain $f$.  We have $|K_1|=|F|-1\le 11$, and thus Lemma~\ref{lemma-basic}(c) implies that $V(G)=V(F)\cup R$.
Consequently, the edges $v_3y$ and $v_4y$ are chords of $K_1$, contradicting Lemma~\ref{lemma-basic}(d).
\end{itemize}
This finishes the proof of the claim $(\star)$ that all the faces of $G$ other than $F$ have non-negative final charge.
\end{subproof}

Note that the outer face $F$ has charge at least $|F|-6$.
Since $G$ is $2$-connected, at most $|F|-2$ vertices of $F$ have degree two.  By ($\dagger$) and $(\star)$, the sum of the final charges
of vertices and faces of $G$ is at least $|F|-6-\frac{3}{2}(|F|-2)-2=-5-|F|/2\ge -11$.  This is a contradiction, since by the choice of
the initial charge, the sum of the charges is $-12$.
\end{proof}

\bibliographystyle{siam}
\bibliography{lisstein}

\end{document}